\documentclass[14pt]{amsart}
\address{\newline{\normalsize Courant Institute, NYU, 251 Mercer str., New York, NY 10012, USA}\newline{\it E-mail address}:
karzhema@cims.nyu.edu}
\usepackage{amscd,amssymb}
\usepackage{amsthm,amsmath,amssymb}
\usepackage[matrix,arrow]{xy}

\makeatletter\@addtoreset{equation}{section}\makeatother

\makeatletter\@addtoreset{subsection}{equation}\makeatother

\newtheorem{theorem}[equation]{Theorem}
\newtheorem{proposition}[equation]{Proposition}
\newtheorem{lemma}[equation]{Lemma}

\newtheorem{theorem-definition}[equation]{Theorem-definition}

\theoremstyle{definition}

\newtheorem*{notation}{Notation}

\theoremstyle{remark}
\newtheorem{remark}[equation]{Remark}

\textwidth = 17.5cm \textheight = 22.5cm

\addtolength{\topmargin}{-40pt}
\addtolength{\oddsidemargin}{-2.3cm}
\addtolength{\evensidemargin}{-2.2cm}

\pagestyle{plain}

\begin{document}

%\Large

%\udc{512.761.5}

\title{On the quotient of $\mathbb{C}^4$ by a finite primitive group of type (I)}

%\thanks{The work was partially supported by
%RFFI grant No. 08-01-00395-a and grant N.Sh.-1987.2008.1.}

\author{Ilya Karzhemanov}

\begin{abstract}
We study rationality problem for the quotient of $\mathbb{C}^4$ by
a finite primitive group $G$ of Type (I). We prove that this
quotient is a rational variety for any such $G$.
\end{abstract}

\sloppy

\maketitle

\bigskip

\section{Introduction}
\label{section:k-quotient-0}

Given a complex affine space $\mathbb{C}^n =
\mathrm{Spec}(\mathbb{C}[x_{1}, \ldots, x_{n}])$ and a finite
group $G$ acting linearly on $\mathbb{C}^n$, one of the
fundamental questions to ask is whether the field of $G$-invariant
rational functions on $\mathbb{C}^n$ is also a purely
transcendental extension of $\mathbb{C}$, or, in other words,
whether variety $\mathbb{C}^{n}/G$ is rational (see
\cite{prok-inv} (and references therein) for an extensive overview
of the current state of the problem). By a simple argument (see
\cite[Proposition 1.2]{prok-inv}), one can show that
$\mathbb{C}^{n}/G$ is birationally isomorphic to
$(\mathbb{P}(\mathbb{C}^{n})/G) \times \mathbb{P}^1$, and hence $n
= 4$ is the first non-trivial issue, since the L\"uroth problem
has a positive solution for $n \leqslant 3$. The case of $n = 4$
has been treated in detail in \cite{prok-inv}. However, for some
of the groups $G$ (non-)rationality of $\mathbb{C}^{4}/G$ was not
established.

Namely, let $\mathbb{O}, \mathbb{I} \subset SL_{2}(\mathbb{C})$ be
the octahedron and icosahedron subgroups, respectively. Identify
$U_0 := \mathbb{C}^4$ with the
space of $(2 \times 2)$-matrices $A := \begin{pmatrix} X_1 & X_2 \\
X_3 & X_4 \end{pmatrix}$, $X_i \in \mathbb{C}$, and consider the
action of the group $G := \mathbb{O} \times \mathbb{I}$ on $U_0$
such that $\mathbb{O}$ and $\mathbb{I}$ act by multiplying $A$
from the left and right, respectively. Furthermore, by the above
argument in order to establish rationality of $U_{0}/G$, one may
assume that $G := (\mathbb{O} \times \mathbb{I}) \cdot
\mathbb{C}^*$ for the standard diagonal action of $\mathbb{C}^*$
on $U_0$. Then for such group action we prove the following:

\begin{theorem}
\label{theorem:main} The $3$-fold $U_{0}/G$ is rational.
\end{theorem}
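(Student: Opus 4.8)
The plan is to replace the linear action by a translation action on a group and then cut the dimension down by a conic-bundle structure. Dividing first by the central $\mathbb{C}^*$ identifies $U_{0}/G$ birationally with $\mathbb{P}^3/\bar G$, where $\mathbb{P}^3=\mathbb{P}(U_0)$ and $\bar G\subset\mathrm{PGL}_4$ is the image of $\mathbb{O}\times\mathbb{I}$; since the centre $\{(\pm I,\pm I)\}$ is the kernel, $\bar G\cong\bar{\mathbb{O}}\times\bar{\mathbb{I}}$ with $\bar{\mathbb{O}}\cong S_4$ and $\bar{\mathbb{I}}\cong A_5$ the octahedral and icosahedral subgroups of $\mathrm{PGL}_2$. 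As $\mathbb{O},\mathbb{I}\subset SL_2$, the form $\det A$ is $(\mathbb{O}\times\mathbb{I})$-invariant, so $Q=\{\det A=0\}$ is a $\bar G$-invariant smooth quadric whose complement is exactly $\{\det A\neq0\}=\mathrm{PGL}_2(\mathbb{C})$, on which left/right multiplication becomes left/right translation. Hence
\[
U_{0}/G\ \sim\ \mathbb{P}^3/\bar G\ \sim\ \bar{\mathbb{O}}\,\backslash\,\mathrm{PGL}_2(\mathbb{C})\,/\,\bar{\mathbb{I}}.
\]
Reading this double coset as the set of $\mathrm{PGL}_2$-orbits on $\mathrm{PGL}_2/\bar{\mathbb{O}}\times\mathrm{PGL}_2/\bar{\mathbb{I}}$, it is the space of pairs (octahedral configuration, icosahedral configuration) of points of $\mathbb{P}^1$ modulo simultaneous $\mathrm{PGL}_2$.

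Next I would exploit the coincidence ``octahedral configuration $=$ self-polar triangle''. An octahedral configuration is the set of six fixed points of the three involutions of a Klein four-group $V\subset\mathrm{PGL}_2$, and determines $V$. Writing $\mathrm{PGL}_2=SO(\mathcal{C})$ acting on $\mathbb{P}^2=\mathbb{P}(\mathrm{Sym}^2\mathbb{C}^2)$ and preserving the discriminant conic $\mathcal{C}$, an involution is a point of $\mathbb{P}^2\setminus\mathcal{C}$, and two involutions commute iff the points are conjugate with respect to $\mathcal{C}$; thus $V$ is a self-polar triangle of $\mathcal{C}$. Letting $M$ be the (three-dimensional) variety of self-polar triangles of $\mathcal{C}$, on which $\bar{\mathbb{I}}=A_5\subset SO_3$ acts through its three-dimensional orthogonal representation, the previous step gives
\[
U_{0}/G\ \sim\ M/A_5 .
\]
Marking one vertex $P$ of a triangle produces a degree-$3$ cover $\widetilde M\to M$ together with an $SO_3$-equivariant morphism $\widetilde M\to U:=\mathbb{P}^2\setminus\mathcal{C}$, $P\mapsto P$, whose fibre over $P$ is the set of pairs on the polar line $P^{\perp}$ conjugate with respect to $\mathcal{C}$, i.e. a rational curve. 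Passing to $A_5$-quotients yields a conic bundle $\widetilde M/A_5\to U/A_5$.

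The base is rational: $\mathbb{P}^2/A_5$ is a unirational surface, hence rational by Castelnuovo's criterion, and $U/A_5$ is an open subset. A conic bundle over a rational surface is rational as soon as its generic fibre carries a rational point over the function field of the base, so $\widetilde M/A_5$ would be rational, and it remains to descend from this rational model to $U_{0}/G\sim M/A_5$.

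The main obstacle is precisely this descent. Note that $U_{0}/G$ is visibly \emph{unirational}, being dominated by $\mathrm{PGL}_2$, but for a threefold unirationality does not yield rationality, and the entire content of the theorem is to produce an explicit birational model. Concretely I would: (a) trivialize the conic bundle by exhibiting an $A_5$-equivariant rational section of $\widetilde M\to U$, that is, a rational rule selecting a conjugate pair on each polar line, turning $\widetilde M/A_5$ into a $\mathbb{P}^1$-bundle over $U/A_5$; and (b) coordinatize $U/A_5$ through the classical $A_5$-invariants $F_2,F_6,F_{10},F_{15}$ on $\mathbb{P}^2$ and make the descent along the degree-$3$ vertex cover explicit via symmetric functions of the three vertices. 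Verifying the existence of the section and carrying out this $S_3$-descent is where the real work lies. As an independent cross-check, the two linearly independent degree-$12$ invariants of $\mathbb{O}\times\mathbb{I}$, namely $(\det A)^6$ and the invariant $P_{12}$ coming from $(\mathrm{Sym}^{12}V)^{\mathbb{O}}\otimes(\mathrm{Sym}^{12}W)^{\mathbb{I}}$, have $\bar G$-invariant ratio $P_{12}/(\det A)^6$, which realizes $U_{0}/G\dashrightarrow\mathbb{P}^1$; its generic fibre is the very surface that the conic bundle above is designed to rationalize.
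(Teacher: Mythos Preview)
Your framework up to the identification $U_{0}/G\sim M/A_5$ is correct and pleasant: the passage to $\mathrm{PGL}_2$, the double-coset description, and the model of $\mathrm{PGL}_2/S_4$ as self-polar triangles are all fine. But the proposal is not a proof; you say so yourself. The two steps you label (a) and (b) are exactly the content of the theorem, and neither is carried out.

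For (a), you need an $A_5$-equivariant rational section of $\widetilde M\to U=\mathbb{P}^2\setminus\mathcal{C}$. None is exhibited, and there is no obvious candidate: the only canonical pair on $P^{\perp}$ is $P^{\perp}\cap\mathcal{C}$, which is \emph{not} a conjugate pair off $\mathcal{C}$. Without such a section you have not even shown that $\widetilde M/A_5$ is rational, only that it is a conic bundle over a rational surface; such bundles can certainly be irrational. For (b), even granting rationality of $\widetilde M/A_5$, you must still take the $S_3$-quotient to reach $M/A_5$, and rationality of an $S_3$-quotient of a rational threefold is not automatic. Your ``symmetric functions of the three vertices'' remark is a heuristic, not an argument. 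So as written the proposal establishes only unirationality of $U_0/G$, which, as you note, is not the point.

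The paper takes an entirely different route and sidesteps both obstacles. Instead of working on $\mathrm{PGL}_2$, it first divides $U_0=\mathbb{C}^4$ by the central $\mathbb{Z}/2\mathbb{Z}$ and builds an explicit projective compactification $V'$ of $U_0/(\mathbb{Z}/2\mathbb{Z})$; a direct computation of $K_{V'}$ and an ampleness argument identify $V'$ with the singular quadric cone in $\mathbb{P}^5$. The key point, which has no analogue in your outline, is that on $V'$ there are \emph{three} linearly independent $G$-invariant hyperplane sections (two coming from the determinant locus and its ``inverse'', and a third produced from the $\mathbb{P}^1$-bundle structure over the base quadric after blowing up the vertex). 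Projecting $V'\subset\mathbb{P}^5$ from the $G$-invariant plane they span gives a $G$-equivariant map to $\mathbb{P}^2$ whose generic fibre is a quadratic cone with at least three $G$-invariant sections; this yields a $G$-equivariant birational splitting
\[
V'\ \approx_G\ (\text{quadratic cone with trivial }\mathbb{O}\times\mathbb{I}\text{-action})\times\mathbb{P}^2,
\]
from which rationality of $V'/G$, and then of $U_0/G$, is immediate. The extra $\mathbb{Z}/2\mathbb{Z}$ is what produces the third invariant section; on the smooth quadric $G(2,4)$ compactifying $\mathbb{C}^4$ there are only two, which is why your direct attack on $\mathbb{P}^3/\bar G$ runs into the section/descent problems above.
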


Theorem~\ref{theorem:main} settles the remaining case in
\cite{prok-inv} of quotients of $\mathbb{P}^3$ (or, equivalently,
$\mathbb{C}^4$) by \emph{finite primitive groups of Type (I)} (see
\cite[Section 2]{prok-inv} for the description of these).

Let us outline the proof of Theorem~\ref{theorem:main}. Recall
that in \cite{prok-inv}, after taking the $\mathbb{C}^*$-quotient
of $U_0$ and passing to the projectivized $G$-action on
$\mathbb{P}^3$, with $G$ now equal $\mathbb{O} \times \mathbb{I}$,
one can notice that $\mathbb{P}^3/G$ is birationally isomorphic to
$SL_{2}(\mathbb{C})/G$ for the induced $G$-action on
$SL_{2}(\mathbb{C}) \subset U_0$. Further, compactifying
$SL_{2}(\mathbb{C})$ by a smooth Fano $3$-fold $W$ with either
$\mathbb{O}$- or $\mathbb{I}$-action, one might try to prove that
the corresponding quotient of $W$ is rational by finding an
equivariant birational map of $W$ onto a product of
positive-dimensional varieties (see \cite[Section 2]{prok-inv},
where this idea worked perfectly well for all finite primitive
groups of Type (I), except for the given $G$).

Our approach is more direct (and simpler in a sense). Namely, let
the group $\mathbb{Z}/2\mathbb{Z}$ act on $U_0$ by multiplying
every $X_i$ by $-1$, so that the $G$-action descends to
$U_{0}/(\mathbb{Z}/2\mathbb{Z})$. A natural generalization of the
construction of $\mathbb{P}^1$ leads to a projective
compactification $V'$ of $U_{0}/(\mathbb{Z}/2\mathbb{Z})$ (see
Section~\ref{section:quotient-1} below).\footnote{By ``$V'$
compactifies $U_{0}/(\mathbb{Z}/2\mathbb{Z})$'' we mean that
$\mathbb{C}(V') = \mathbb{C}(U_{0}/(\mathbb{Z}/2\mathbb{Z}))$ for
the fields of meromorphic functions.} This $V'$ turns out to be a
Fano $4$-fold with isolated terminal singularities, of Picard
number $1$ and Fano index $4$, i.e., $V'$ is a quadratic cone in
$\mathbb{P}^5$ by a result of T. Fujita (see
Lemma~\ref{theorem:g-2-4-a-s-2}). Furthermore, the $G$-action on
$U_{0}/(\mathbb{Z}/2\mathbb{Z})$ extends to a regular action on
$V'$, and $V' \subset \mathbb{P}^5$ happens to have three linearly
independent $G$-invariant hyperplane sections (see
Lemma~\ref{theorem:many-invariants}). Then, considering the
corresponding $G$-equivariant linear projection $V \dashrightarrow
\mathbb{P}^2$, we split the threefold $V'/G$ birationally into a
product of positive-dimensional varieties, thus proving
rationality of $V'/G$ (see Lemma~\ref{theorem:v-prime}). It is now
easy to see that $U_0/G$ is also rational (see
Lemma~\ref{theorem:v-prime-1}).

\begin{remark}
\label{remark:other-proof} Instead of $\mathbb{O} \times
\mathbb{I}$ one may take any other finite primitive group $G$ of
Type (I) and prove that the corresponding quotient
$\mathbb{C}^{4}/G$ is rational, repeating literally the arguments
in Sections~\ref{section:quotient-1} and \ref{section:quotient-2}
below. This gives another proof of Theorem 2.1 in \cite{prok-inv}.
\end{remark}

\begin{notation}
We use standard notions and facts from \cite{isk-prok}. Also
throughout the paper we use the following notation:

\begin{itemize}

\item[(\dag)] Given two varieties $X$ and $Y$, $X
\approx Y$ denotes birational equivalence between them. For an
algebraic group $G$ acting regularly on both $X$ and $Y$, we write
$X \approx_G Y$ if there exists a $G$-equivariant birational map
$X \dashrightarrow Y$.

\end{itemize}

\end{notation}

\bigskip

\section{One explicit compactification}
\label{section:quotient-1}

\refstepcounter{equation}
\subsection{}
\label{subsection:grass-g-2-4}

Take another copy $U_1$ of $\mathbb{C}^4$. Identify $U_1$ with the
space of $(2 \times 2)$-matrices, as $U_0$ above. Let $\varphi_1 :
U_0 \dashrightarrow U_1$ be birational map induced by the morphism
$GL_{2}(\mathbb{C}) \longrightarrow  GL_{2}(\mathbb{C})$ which
sends every invertible matrix $A \in U_0$ to $A^{-1} \in U_1$. Set
$X^{(1)}_{i} := \varphi_{1}^{-1*}(X_{i})$, $1 \leqslant i
\leqslant 4$. These extend to affine coordinates on $U_1$. Put
also $\Delta_0 := \det A$ and $\Delta_1 :=
\varphi_{1}^{-1*}(\Delta_{0})$.

Further, let $l_{\alpha, \beta}$ be the linear automorphism of
$U_0$ which permutes $X_{\alpha}$ and $X_{\beta}$ in $A$ with
$\alpha + \beta \ne 5$. Take another copy $U_{\alpha, \beta}$ of
$\mathbb{C}^4$, as $U_0$ and $U_1$ above, and consider birational
map $\varphi_{\alpha, \beta} := \varphi_1 \circ l_{\alpha, \beta}
: U_0 \dashrightarrow U_{\alpha, \beta}$. Set $X^{(\alpha,
\beta)}_{i} := \varphi_{\alpha, \beta}^{-1*}(X_{i})$. These extend
to affine coordinates on $U_{\alpha, \beta}$. Put also
$\Delta_{\alpha, \beta} := \varphi_{\alpha,
\beta}^{-1*}(\Delta_{0})$.

Now glue $U_0$, $U_1$, $U_{\alpha, \beta}$ together via the maps
$\varphi_{1}$, $\varphi_{\alpha, \beta}$ for various $\alpha,
\beta$. We get a smooth complex $4$-fold $V$ so that $U_0$, $U_1$,
$U_{\alpha, \beta}$ are analytic domains covering $V$. Note that
$\Delta_1 = \Delta_{0}^{-1}$ on $U_0 \cap U_1$ and
$\Delta_{\alpha, \beta} = l_{\alpha, \beta}^{*}(\Delta_{0})$ on
$U_0 \cap U_{\alpha, \beta}$.

\begin{lemma}
\label{theorem:g-2-4} $V = G(2,4)$, the Grassmanian of $2$-planes
in $\mathbb{C}^4$.
\end{lemma}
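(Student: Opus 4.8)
My plan is to prove the identification by realizing the gluing that defines $V$ as the standard affine atlas of $G(2,4)$ seen inside its Pl\"ucker embedding. Recall that $G(2,4)$ sits in $\mathbb{P}^5$, with homogeneous Pl\"ucker coordinates $(p_{12}:p_{13}:p_{14}:p_{23}:p_{24}:p_{34})$, as the smooth quadric cut out by $p_{12}p_{34}-p_{13}p_{24}+p_{14}p_{23}=0$, and that a $2$-plane transversal to the coordinate plane $\langle e_3,e_4\rangle$ is the row span of a matrix $(I_2 \mid A)$. First I would define $\pi_0 : U_0 \to G(2,4)$ by $A \mapsto \mathrm{rowspan}(I_2 \mid A)$; a direct computation of the $2\times 2$ minors gives the Pl\"ucker point $(1 : X_3 : X_4 : -X_1 : -X_2 : \Delta_0)$, which visibly satisfies the quadric relation (the two $\Delta_0$-terms cancel) and exhibits $\pi_0$ as an isomorphism of $U_0$ onto the big cell $\{p_{12}\ne 0\}$.

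Next I would check that $\pi_0$ extends to a morphism $\pi : V \to G(2,4)$. Since the six Pl\"ucker coordinates are, up to a common scalar, regular on each of the copies $U_1$ and $U_{\alpha,\beta}$, it suffices to rewrite them in the coordinates $X^{(1)}_i$, $X^{(\alpha,\beta)}_i$ and verify base-point-freeness chart by chart. Here the transition rules recorded in the construction are exactly what is needed: on $U_0\cap U_1$ one has $\Delta_1=\Delta_0^{-1}$, so after clearing the factor $\Delta_0^{-1}$ the Pl\"ucker point acquires a nonvanishing unit in its $p_{34}$-slot, showing that $\pi$ is a morphism on all of $U_1$ mapping it onto $\{p_{34}\ne 0\}$; and on $U_0\cap U_{\alpha,\beta}$ the rule $\Delta_{\alpha,\beta}=l_{\alpha,\beta}^*(\Delta_0)$ together with $\varphi_{\alpha,\beta}=\varphi_1\circ l_{\alpha,\beta}$ produces, after the analogous normalization, a representative in which one of the mixed coordinates $p_{ij}$, $\{i,j\}\ne\{1,2\},\{3,4\}$, is a nonvanishing unit. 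Thus $\pi$ is base-point-free, hence a morphism, and its image meets every standard chart of $G(2,4)$.

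Finally I would argue that $\pi$ is an isomorphism. It is birational, being an isomorphism on $U_0$, and by the previous step each chart $U_\bullet$ maps isomorphically onto an open subset of $G(2,4)$; since these opens cover $G(2,4)$ --- in particular the four charts $U_{\alpha,\beta}$ with $\alpha+\beta\ne 5$ sweep out the surface $\{p_{12}=p_{34}=0\}\cap G(2,4)$ missed by $U_0\cup U_1$ --- the local inverses glue to a global inverse of $\pi$, whence $V\cong G(2,4)$.

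The conceptual content is light; the real work is bookkeeping, and this is where I expect the main obstacle. One must carry the coordinate conventions through the mixed charts without error, tracking how the definitions $X^{(\alpha,\beta)}_i=\varphi_{\alpha,\beta}^{-1*}(X_i)$ and the rule $\Delta_{\alpha,\beta}=l_{\alpha,\beta}^*(\Delta_0)$ interact with matrix inversion, and confirming that the resulting representative of $\pi$ on each $U_{\alpha,\beta}$ really does have a unit in the expected Pl\"ucker slot. The second, and genuinely necessary, point is the covering claim: one must verify that the hypothesis $\alpha+\beta\ne 5$ selects precisely the transpositions whose charts fill in the locus $\{p_{12}=p_{34}=0\}$, so that $V$ is the whole (compact) Grassmannian and not a proper open subset of it, the two excluded diagonal transpositions contributing nothing new here.
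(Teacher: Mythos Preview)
Your proposal is correct and is precisely the approach the paper has in mind: the paper's entire proof is the single word ``Evident (by definition of the complex structure on $G(2,4)$),'' and what you have written is an explicit unpacking of that claim via the Pl\"ucker embedding, matching the six charts $U_0,U_1,U_{\alpha,\beta}$ to the six standard affine cells $\{p_{ij}\ne 0\}$ of $G(2,4)\subset\mathbb{P}^5$. Your bookkeeping (the Pl\"ucker point $(1:X_3:X_4:-X_1:-X_2:\Delta_0)$ on $U_0$, the passage to $\{p_{34}\ne 0\}$ on $U_1$ via $\Delta_1=\Delta_0^{-1}$, and the observation that the two excluded swaps $\alpha+\beta=5$ fix $\Delta_0$ and hence produce no new chart) is accurate, so there is nothing to correct---you have simply supplied the details the paper omits.
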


\begin{proof}
Evident (by definition of the complex structure on $G(2,4)$).
\end{proof}

\refstepcounter{equation}
\subsection{}
\label{subsection:grass-g-2-4-alg-space}

Let us now replace each of $U_i$ and $U_{\alpha, \beta}$ in
{\ref{subsection:grass-g-2-4}} by
$\mathbb{C}^4/(\mathbb{Z}/2\mathbb{Z})$, where
$\mathbb{Z}/2\mathbb{Z}$ acts via $X_i \mapsto -X_i$, $1 \leqslant
i \leqslant 4$. Note that the gluing maps $\varphi_1$ and
$\varphi_{\alpha, \beta}$ are
$(\mathbb{Z}/2\mathbb{Z})$-equivariant, hence we can glue the six
copies of $\mathbb{C}^{4}/(\mathbb{Z}/2\mathbb{Z})$ together via
$\varphi_1$, $\varphi_{\alpha, \beta}$ as above. We get an
algebraic space $V'$ (with $\{U_0, U_1, U_{\alpha,
\beta}\}_{\alpha, \beta}$ being an open cover of $V'$ in the
orbifold topology).

\begin{remark}
\label{remark:al-artin} Note that the gluing maps $\varphi_1,
\varphi_{1, 2}, \ldots$ on $V'$ are rather \emph{algebraic} (see
\cite[Ch.~1]{artin}) than analytic. Indeed, $\varphi_1,
\varphi_{1, 2}$, etc., when lifted to the universal covers of the
charts $U_0 := \mathbb{C}^{4}/(\mathbb{Z}/2\mathbb{Z}),\ldots$,
are only $\mathbb{Z}/2\mathbb{Z}$-equivariant, but not
$\mathbb{Z}/2\mathbb{Z}$-invariant. It is easy to see, however,
that the complex (scheme) structure on $V'$ is provided by the
charts $U_0 \cup U_1, U_0 \cup U_{1, 2}, \ldots$ (but \emph{not}
by $\{U_0, U_1, U_{\alpha, \beta}\}_{\alpha, \beta}$), glued from
$U_0, U_1,U_{1,2}$, etc. via $\varphi_1, \varphi_{1, 2},\ldots$.
\end{remark}

\begin{lemma}
\label{theorem:g-2-4-a-s} $V'$ is compact.
\end{lemma}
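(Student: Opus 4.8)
The plan is to exhibit $V'$ as the continuous image of the compact space $V = G(2,4)$ under a naturally defined quotient morphism, and then invoke the fact that the continuous image of a compact space is itself compact.

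First I would construct a morphism $\pi : V \to V'$ by patching together the $\mathbb{Z}/2\mathbb{Z}$-quotient maps on the individual charts. On each of the six copies of $\mathbb{C}^4$ covering $V$ there is the tautological quotient morphism $q : \mathbb{C}^4 \to \mathbb{C}^{4}/(\mathbb{Z}/2\mathbb{Z})$ onto the corresponding chart of $V'$. The point to check is that these local maps agree on overlaps. On $U_0 \cap U_1$, for instance, the gluing of $V$ is given by $\varphi_1$, while the gluing of $V'$ is given by the map $\bar{\varphi}_1$ that $\varphi_1$ induces on the quotients. Since $\varphi_1$ (and likewise each $\varphi_{\alpha, \beta}$) is $(\mathbb{Z}/2\mathbb{Z})$-equivariant, one has $q \circ \varphi_1 = \bar{\varphi}_1 \circ q$ on the overlap, and similarly for every other double overlap. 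Hence the local quotient maps patch to a single morphism $\pi : V \to V'$, which is moreover finite (of degree $2$ on each chart) and surjective.

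By Lemma~\ref{theorem:g-2-4}, $V = G(2,4)$ is a smooth projective variety, hence compact in the analytic topology. Since $\pi$ is continuous and surjective (in the orbifold topology on $V'$ recorded in {\ref{subsection:grass-g-2-4-alg-space}}), the space $V' = \pi(V)$ is the continuous image of a compact space and is therefore compact.

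I expect the main, and essentially the only, delicate point to be the verification that the six local quotient morphisms are mutually compatible on all the double overlaps, so that they genuinely define the global map $\pi$. This reduces to the equivariance of the gluing data $\varphi_1, \varphi_{\alpha, \beta}$, which was arranged in {\ref{subsection:grass-g-2-4-alg-space}}; once $\pi$ is in hand the compactness conclusion is immediate. As an alternative formulation, since $\pi$ is finite and hence proper while $V$ is proper over $\mathbb{C}$, the same surjectivity argument shows that $V'$ is proper as an algebraic space, which is the scheme-theoretic counterpart of the asserted compactness.
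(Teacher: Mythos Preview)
Your argument and the paper's rest on the same two ingredients: the compactness of $V=G(2,4)$ (Lemma~\ref{theorem:g-2-4}) and the $\mathbb{Z}/2\mathbb{Z}$-equivariance of the gluing data $\varphi_1,\varphi_{\alpha,\beta}$. The paper, however, packages these via the punctured-disk criterion rather than through a global map: given a family $O_t\in V'$ over $\Delta\setminus\{0\}$, one lifts it \emph{locally} to $V$ using the equivariance, extends by compactness of $V$, and pushes the limit back down. Only the chart-by-chart quotient maps are needed for this, never a global morphism $V\to V'$.

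That distinction is not cosmetic here. Remark~\ref{remark:only-bir} states explicitly that the quotient map $\mathbb{C}^4\to U_0$ does \emph{not} induce a regular map $V=G(2,4)\to V'$, so that $V'$ is only birationally a $\mathbb{Z}/2\mathbb{Z}$-quotient of $V$; Remark~\ref{remark:al-artin} likewise warns that the charts $U_0,U_1,U_{\alpha,\beta}$ do not by themselves give the scheme structure on $V'$. Taken at face value, this means the global morphism $\pi$ on which your argument hinges is not available. The overlap check $q\circ\varphi_1=\bar\varphi_1\circ q$ you perform is correct and is precisely the computation the paper relies on, but the paper deliberately extracts only the weaker local-lifting statement from it. If you accept Remark~\ref{remark:only-bir}, your patching step has a genuine gap; at minimum you would have to argue separately, in the algebraic-space framework of \ref{subsection:grass-g-2-4-alg-space}, why the six local quotients assemble into a global $\pi$, or else retreat to the purely topological claim that they patch to a continuous surjection of underlying spaces---which would still give compactness, but is not quite what you wrote.
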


\begin{proof}
Let $\Delta\subset \mathbb{C}$ be a small disk around $0$. We have
to prove that any (analytic) family of points $O_t \in V'$,
parameterized by $\Delta\setminus{\{0\}}\ni t$, extends to a
family at $t = 0$. This follows from Lemma~\ref{theorem:g-2-4} and
the fact that the gluing maps $\varphi_1, \varphi_{1, 2}, \ldots$
are $\mathbb{Z}/2\mathbb{Z}$-equivariant.
\end{proof}

The next lemma is straightforward from the construction of $V'$
(cf. Remark~\ref{remark:al-artin}):

\begin{lemma}
\label{theorem:g-2-4-a-s-1} $\mathbb{C}(V') = \mathbb{C}(U_0)$.
\end{lemma}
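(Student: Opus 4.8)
The plan is to reduce the statement to the general principle that, on an integral algebraic space, the field of meromorphic (rational) functions is already realized on any nonempty open subscheme, and then to exhibit $U_0$ as such a subscheme of $V'$. Thus the two things to verify are that $V'$ is integral and that $U_0$ is a dense open chart of it.

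First I would check integrality. Each of the six charts is a copy of the affine variety $\mathbb{C}^4/(\mathbb{Z}/2\mathbb{Z})$, which is integral since the quotient of affine space by a finite group is again an integral affine variety (its coordinate ring is the invariant ring, a domain). They are glued along \emph{nonempty} Zariski-dense open subsets via the birational maps $\varphi_1$ and $\varphi_{\alpha, \beta}$. By Remark~\ref{remark:al-artin} the scheme structure on $V'$ is the one furnished by the charts $U_0 \cup U_1, U_0 \cup U_{1,2}, \ldots$; each of these is integral and any two overlap in a dense open set, so $V'$ is an integral algebraic space. Density of $U_0$ is then immediate: by Lemma~\ref{theorem:g-2-4} the chart $U_0$ is dense in $V = G(2,4)$, and $V'$ is built by the same gluing pattern after the chartwise finite quotient, which does not affect density. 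Equivalently, since $\varphi_1$ and $\varphi_{\alpha, \beta}$ are birational, $U_0$ meets $U_1$ and each $U_{\alpha, \beta}$ in a dense open subset, and these six charts cover $V'$.

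Granting these two facts, the conclusion is formal: restriction of functions induces an isomorphism $\mathbb{C}(V') \xrightarrow{\sim} \mathbb{C}(U_0) = \mathbb{C}(X_1, \ldots, X_4)^{\mathbb{Z}/2\mathbb{Z}}$, which is exactly the assertion. I do not expect a genuine obstacle here, as the authors themselves call the statement straightforward. The single point deserving care is the algebraic-space/orbifold distinction flagged in Remark~\ref{remark:al-artin}: one must compute $\mathbb{C}(V')$ with respect to the scheme structure rather than the orbifold cover $\{U_0, U_1, U_{\alpha, \beta}\}$, but since $U_0$ remains a dense open integral subscheme in that structure, the argument carries over verbatim.
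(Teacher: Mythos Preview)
Your argument is correct and is precisely the kind of verification the paper has in mind: the paper itself offers no proof beyond declaring the lemma ``straightforward from the construction of $V'$ (cf.\ Remark~\ref{remark:al-artin}),'' and your proposal simply makes explicit the two points implicit in that phrase---integrality of $V'$ and density of the chart $U_0$. There is no difference in approach to comment on.
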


\begin{remark}
\label{remark:only-bir} One can easily see that the quotient map
$\mathbb{C}^4 \longrightarrow U_0 :=
\mathbb{C}^{4}/(\mathbb{Z}/2\mathbb{Z})$ does not induce a
\emph{regular} map $V = G(2,4) \longrightarrow V'$. Thus, in view
of Lemma~\ref{theorem:g-2-4-a-s-1}, $V'$ is only
\emph{birationally} a quotient $V/(\mathbb{Z}/2\mathbb{Z})$.
\end{remark}

\refstepcounter{equation}
\subsection{}
\label{subsection:can-div}

Let $D_0$ be a divisor on $V'$ with local equations $\Delta_0 = 0$
on $U_0$ and $\Delta_{\alpha, \beta} = 0$ on $U_{\alpha, \beta}$
for all $\alpha, \beta$ (cf. {\bf 2.1}). Note that the defining
equations of $D$, when lifted to the universal covers of
$U_0,U_1,\ldots$, are $(\mathbb{Z}/2\mathbb{Z})$-\emph{invariant}
(cf. Remark~\ref{remark:al-artin}). Then the sheaf property (see
\cite[Ch.~2]{artin}) implies that $D_0$ is a Cartier divisor on
$V'$. Let $\mathcal{L} := \mathcal{O}_{V'}(D_0)$ be the
corresponding line bundle.

\begin{lemma}
\label{theorem:l-metric} $D_0$ is irreducible and $\mathcal{L}$
carries a Hermitian metric $|\cdot|$ such that $1 = |\Delta_{0}| =
|\Delta_{\alpha, \beta}|$ on $U_0 \cap U_1$ and $U_{0} \cap
U_{\alpha, \beta}$ for all $\alpha, \beta$.
\end{lemma}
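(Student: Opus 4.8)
The plan is to prove the two assertions separately: irreducibility will follow from the explicit model of Lemma~\ref{theorem:g-2-4}, while the metric will be built directly out of the local equations of $D_0$, the key input being the gluing relations of {\bf 2.1} together with the algebraic-space structure of Remark~\ref{remark:al-artin}.

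First I would establish irreducibility. In suitable Pl\"ucker coordinates $p_{ij}$ on $V = G(2,4) \subset \mathbb{P}^5$ one has $\Delta_0 = p_{34}/p_{12}$ on $U_0$, so that on $V$ the divisor $D_0$ is the hyperplane section $\{p_{34} = 0\}$ of the Klein quadric, an \emph{irreducible} quadric threefold which is not contained in the chart-complement $\{p_{12} = 0\} = V \setminus U_0$; hence its trace on $U_0$ is dense in $D_0$, and the same holds on $V'$. Now $\Delta_0 = \det A$ descends to $U_0 = \mathbb{C}^4/(\mathbb{Z}/2\mathbb{Z})$ (being quadratic in the $X_i$), and its zero locus there is the image of the affine determinantal cone $\{\det A = 0\} = \{X_1 X_4 = X_2 X_3\} \subset \mathbb{C}^4$. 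The latter is irreducible, so its image $D_0 \cap U_0$ is irreducible; since $U_0$ is dense in $V'$ and $D_0$ has no component in $V' \setminus U_0$, the set $D_0 \cap U_0$ is dense in $D_0$, whence $D_0$ is irreducible.

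For the metric I would use the Hermitian metric on $\mathcal{L} = \mathcal{O}_{V'}(D_0)$ adapted to $D_0$, characterised by requiring the canonical section $s_{D_0}$ (with $\mathrm{div}(s_{D_0}) = D_0$) to have constant length $1$ off $D_0$. Trivialise $\mathcal{L}$ over each chart by the frame $\epsilon_i$ in which $s_{D_0}$ is represented by the local equation of $D_0$ — namely $f_0 = \Delta_0$ on $U_0$, $f_{\alpha, \beta} = \Delta_{\alpha, \beta}$ on $U_{\alpha, \beta}$, and the unit $f_1 = 1$ on $U_1$ (recall $D_0 \cap U_1 = \emptyset$) — and set $\|\epsilon_i\|^2 := |f_i|^{-2}$. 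The transition functions being $t_{ij} = f_j / f_i$, the compatibility condition $\|\epsilon_i\|^2 = |t_{ij}|^2 \|\epsilon_j\|^2$ becomes the identity $|f_i|^{-2} = |f_j/f_i|^2 |f_j|^{-2}$, so the $\|\epsilon_i\|$ define a metric and $|s_{D_0}|^2 = |f_i|^2 \|\epsilon_i\|^2 \equiv 1$. The relations $\Delta_1 = \Delta_0^{-1}$ and $\Delta_{\alpha, \beta} = l_{\alpha, \beta}^{*}(\Delta_0)$ of {\bf 2.1} are precisely what makes $f_0, f_1, f_{\alpha, \beta}$ compatible local equations for the single Cartier divisor $D_0$ of {\bf 2.3}; reading $|s_{D_0}| = 1$ in the $U_0$- and $U_{\alpha, \beta}$-frames over the overlaps then yields $|\Delta_0| = 1$ on $U_0 \cap U_1$ and $|\Delta_{\alpha, \beta}| = 1$ on $U_0 \cap U_{\alpha, \beta}$, as claimed.

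The main obstacle is not this formal computation but giving the weights a meaning on $V'$, which is \emph{not} a manifold. Here I would exploit that each $|f_i|^2$ is the squared modulus of a determinant, hence quadratic in the $X_j$ and $(\mathbb{Z}/2\mathbb{Z})$-invariant, so that $\|\epsilon_i\|^2 = |f_i|^{-2}$ descends to an honest positive function on the orbifold chart $\mathbb{C}^4/(\mathbb{Z}/2\mathbb{Z})$; the sheaf property for algebraic spaces (Remark~\ref{remark:al-artin} and \cite[Ch.~2]{artin}) then glues the descended weights into a metric on $\mathcal{L}$ over $V'$. Finally I would record that this metric is singular along $D_0$, the weight $|f_i|^{-2}$ blowing up there with curvature current the positive current $[D_0]$; the asserted normalization concerns only the length of $s_{D_0}$ over the overlaps, where $D_0$ is either absent (on $U_0 \cap U_1$) or excluded, so the value $1$ is attained exactly as stated.
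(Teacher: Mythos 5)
Your proposal is correct and supplies precisely what the paper dismisses as ``Evident'': irreducibility of $D_0$ via the determinantal/Pl\"ucker description (the hyperplane section $\{p_{34}=0\}$ of the Klein quadric, whose trace on $U_0$ is the image of the irreducible cone $\{\det A=0\}$ and is dense in $D_0$), and the metric via the canonical singular metric adapted to $D_0$, whose weights $|f_i|^{-2}$ descend to the orbifold charts because the local equations $\Delta_0$, $\Delta_{\alpha,\beta}$ are $(\mathbb{Z}/2\mathbb{Z})$-invariant. Your explicit observation that this metric is necessarily singular along $D_0$ --- no smooth metric can make the canonical section have norm $1$ on the dense overlaps while vanishing on $D_0$ --- is the one subtlety the paper leaves unstated, and your reading is the one consistent with how the lemma is invoked in the proof of Proposition~\ref{theorem:l-metric-prop}.
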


\begin{proof}
Evident.
\end{proof}

\begin{proposition}
\label{theorem:l-metric-prop} $D_0$ is ample.
\end{proposition}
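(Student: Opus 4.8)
The plan is to derive ampleness of $D_0$ from strict positivity of the curvature of the metric $|\cdot|$ furnished by Lemma~\ref{theorem:l-metric}, and then to invoke the Kodaira embedding theorem. Write $\sigma$ for the tautological section of $\mathcal{L} = \mathcal{O}_{V'}(D_0)$, cut out by $\Delta_0$ on $U_0$ and by $\Delta_{\alpha,\beta}$ on $U_{\alpha,\beta}$, and set $\omega := c_1(\mathcal{L}, |\cdot|)$, the first Chern form of the metric, given on $U_0$ by $-\tfrac{i}{2\pi}\partial\bar\partial \log(|\sigma|^2/|\Delta_0|^2)$ and analogously on the other charts. By the Poincar\'e--Lelong formula this is a smooth real $(1,1)$-form away from $D_0$ which extends across $D_0$ and represents $c_1(\mathcal{L})$; the normalization $1 = |\Delta_0| = |\Delta_{\alpha,\beta}|$ on the overlaps, together with the $\mathbb{Z}/2\mathbb{Z}$-equivariance of the gluing maps, is what guarantees that the chartwise forms agree on intersections, so that $\omega$ is a single well-defined form on $V'$ (equivalently, a $\mathbb{Z}/2\mathbb{Z}$-invariant form on each universal cover $\mathbb{C}^4$ of the charts).

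To prove $\omega > 0$ I would use the Pl\"ucker picture behind Lemma~\ref{theorem:g-2-4}. Presenting a $2$-plane as the row span of $\left(\begin{smallmatrix} 1 & 0 & X_1 & X_2\\ 0 & 1 & X_3 & X_4\end{smallmatrix}\right)$ identifies the six Pl\"ucker coordinates on $U_0$ with $1, \pm X_1, \pm X_2, \pm X_3, \pm X_4$ and $\Delta_0$, so that $D_0 = \{\Delta_0 = 0\}$ is the hyperplane section $\{p_{34}=0\}$ of $G(2,4)\subset\mathbb{P}^5$ and $\mathcal{L}$ is the restriction of $\mathcal{O}_{\mathbb{P}^5}(1)$. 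Crucially, the involution $X_i\mapsto -X_i$ is induced by $\mathrm{diag}(1,1,-1,-1)\in GL_4(\mathbb{C})$, whose action on $\wedge^2\mathbb{C}^4 = \mathbb{C}^6$ is the unitary involution $\mathrm{diag}(1,1,-1,-1,-1,-1)$ in the basis $(p_{12},p_{34},p_{13},p_{14},p_{23},p_{24})$. Hence the standard Fubini--Study metric on $\mathcal{O}_{\mathbb{P}^5}(1)$ is $\mathbb{Z}/2\mathbb{Z}$-invariant, its (positive) curvature restricts to a positive form on the submanifold $G(2,4)$, and being invariant it descends to a positive $(1,1)$-form on each orbifold chart of $V'$. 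Comparing this with $\omega$ on the overlaps via the normalization of Lemma~\ref{theorem:l-metric} then shows that $\omega$ is strictly positive, i.e. a K\"ahler form on $V'$.

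Finally, since $V'$ is a compact complex space with isolated quotient singularities (Lemma~\ref{theorem:g-2-4-a-s}) carrying the K\"ahler class $c_1(\mathcal{L}) = [\omega] > 0$, the $V$-manifold version of the Kodaira embedding theorem shows that $\mathcal{L}^{\otimes m}$ is very ample for $m \gg 0$, whence $D_0$ is ample. Alternatively, once $[\omega] > 0$ is established one may conclude by the Nakai--Moishezon criterion, integrating the positive powers $[\omega]^{\dim Z}$ over subvarieties $Z \subseteq V'$.

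The main obstacle I anticipate is not the local positivity---which is immediate from Fubini--Study---but the global patching forced by the fact, emphasized in Remark~\ref{remark:al-artin}, that the gluing of $V'$ is algebraic rather than analytic and is only $\mathbb{Z}/2\mathbb{Z}$-equivariant (not invariant) on the covers. One must verify that the chartwise invariant Fubini--Study forms on the six covers $\mathbb{C}^4$ actually assemble into one positive form under these gluings, and that the normalization $1 = |\Delta_0| = |\Delta_{\alpha,\beta}|$ is exactly the compatibility needed for this; checking that $\omega$ is a genuine orbifold K\"ahler form at the finitely many singular points, so that the Kodaira/Nakai--Moishezon machinery applies, is the remaining delicate step.
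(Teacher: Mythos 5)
Your overall strategy---equip $\mathcal{L}$ with a Hermitian metric of strictly positive curvature and conclude ampleness by Nakai--Moishezon (or Kodaira embedding)---is the same as the paper's, which builds such a metric explicitly chart by chart (the potential $\prod_{i=1}^{4}(1+|X_i|^2)^{1/4}$ on $U_0$ and its analogues on the other charts, glued by hand) and verifies positivity by direct computation (Lemma~\ref{theorem:if-theta-pos}). The gap lies in how you propose to obtain positivity, namely by descending the Fubini--Study metric from the Pl\"ucker embedding. Such a descent would require $V'$ to be, compatibly with \emph{all} of its gluings, a quotient of $G(2,4)$ by a Fubini--Study isometry; but by Remark~\ref{remark:only-bir} there is no regular map $V=G(2,4)\to V'$ at all: $V'$ is only \emph{birationally} a $\mathbb{Z}/2\mathbb{Z}$-quotient of $V$. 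Concretely, the involution $\mathrm{diag}(1,1,-1,-1)$ you invoke acts as $-1$ only on the charts $U_0$ and $U_1$; its fixed locus on $G(2,4)$ also contains the surface $\{p_{12}=p_{34}=0\}\cap G(2,4)\simeq\mathbb{P}^1\times\mathbb{P}^1$, so the quotient of $G(2,4)$ by it has a two-dimensional singular locus and cannot be $V'$, whose singularities are isolated (Lemma~\ref{theorem:g-2-4-a-s-2}). The identifications that actually define $V'$ are ``$x\mapsto -x$ in every chart,'' glued via $\varphi_{\alpha,\beta}=\varphi_1\circ l_{\alpha,\beta}$, and $l_{\alpha,\beta}$ (a transposition of two matrix entries) is induced by no automorphism of $G(2,4)$; in fact \emph{no} involution of the quadric fourfold $G(2,4)$ has only isolated fixed points, since every automorphism of $G(2,4)$ comes from a linear automorphism of $\mathbb{P}^5$, and for a linear involution the two eigenspaces $P_{\pm}\subset\mathbb{P}^5$ satisfy $\dim P_+ +\dim P_-=4$, so one of them meets the quadric in positive dimension. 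The failure is visible by direct computation: pulling the Fubini--Study potential of the chart $U_{1,2}$ back by $\varphi_{1,2}$ gives $\bigl(1+\sum_i|X_i|^2+|X_2X_4-X_1X_3|^2\bigr)/|X_2X_4-X_1X_3|^2$, which differs from the $U_0$-potential $1+\sum_i|X_i|^2+|X_1X_4-X_2X_3|^2$ by more than the square modulus of a transition function, so the chartwise forms do not agree on overlaps. This is precisely the ``remaining delicate step'' you flag at the end, and it cannot be carried out.

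Two secondary points. First, the normalization $1=|\Delta_0|=|\Delta_{\alpha,\beta}|$ of Lemma~\ref{theorem:l-metric} is a property of one particular metric on $\mathcal{L}$; it says nothing about compatibility of Fubini--Study data on different charts and so cannot serve as the patching mechanism. Second, even granting a positive descended form, the step ``comparing this with $\omega$ on the overlaps \dots shows that $\omega$ is strictly positive'' is a non sequitur: two Hermitian metrics on the same line bundle have curvature forms differing by $\frac{\sqrt{-1}}{2\pi}\partial\bar\partial$ of a global function, and positivity of one never implies positivity of the other (this flaw alone would be harmless, since Nakai--Moishezon could be applied to the descended form directly, but it shows the comparison cannot substitute for the missing patching argument). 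The paper's proof avoids all of this exactly because its potentials $\prod_i(1+|X_i|^2)^{1/4}$ are manifestly even in each chart \emph{and} are checked by hand to transform by the transition functions of $\mathcal{L}$ under the actual gluing maps $\varphi_1$, $\varphi_{\alpha,\beta}$ of $V'$; that explicit verification is the content your proposal would need to replace.
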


\begin{proof}
Let $\theta \in H^{0}(V', \mathcal{L})$ be the global section such
that $(\theta)_0 = D_0$. Put $\theta_0 :=
\theta\big\vert_{\scriptscriptstyle U_{0}}$, $\theta_1 :=
\theta\big\vert_{\scriptscriptstyle U_{1}}$, $\theta_{\alpha,
\beta} := \theta\big\vert_{\scriptscriptstyle U_{\alpha, \beta}}$.

Restrict $\mathcal{L}$ to $U_0$ and define a Hermitian metric
$h_0$ on $\mathcal{L}\big\vert_{U_{0}}$ as follows:
$$
h_0 := (1 + |X_{1}|^{2})|\theta_0|.
$$
Then on $U_0 \cap U_1$ we have
$$
|\theta_1| = |\theta_0|\frac{1}{|\Delta_{0}|} = |\theta_0|,
$$
and hence
$$
h_0 = |\theta_1| + \frac{|X_{1}|^{2}}{|\Delta_{0}|^{2}}|\theta_1|
= (1 + |X^{(1)}_{1}|^{2})|\theta_1|.
$$
This extends $h_0$ to a metric on $\mathcal{L}$ over $U_0 \cup
U_1$. Repeating the same construction, with $U_1$ replaced by
$U_{\alpha, \beta}$, we obtain a global metric on $\mathcal{L}$,
equal
$$
(1 + |X^{(\alpha, \beta)}_{1}|^{2})|\theta_{\alpha, \beta}|
$$
on each $U_{\alpha, \beta}$. Moreover, starting with the metric
$$
h := |\theta_0|\prod_{i = 1}^{4}(1 + |X_{i}|^{2})^{1/4}
$$
on $\mathcal{L}$ over $U_0$, the same argument yields to a
metric\footnote{Equal $|\theta_{\alpha, \beta}|\prod_{i = 1}^{4}(1
+ |X^{(\alpha, \beta)}_{i}|^{2})^{1/4}$ on $U_{\alpha, \beta}$.}
on $\mathcal{L}$ over $X$ which extends $h$. Let us again denote
this new metric by $h$ and consider the $(1, 1)$-form $\Theta :=
\displaystyle\frac{\sqrt{-1}}{2\pi}\partial\bar{\partial}\log h
\in c_{1}(\mathcal{L})$. Then from the Nakai--Moishezon criterion
(see \cite[Th.~5.1]{hart}) we get the following:

\begin{lemma}
\label{theorem:if-theta-pos} If $\sqrt{-1}\Theta
> 0$, then $D_0$ is ample.
\end{lemma}

Further, the condition $\sqrt{-1}\Theta
> 0$ is local, so we restrict ourselves to the chart $U_0$
(the argument is the same for $U_1$ and $U_{\alpha, \beta}$), and
on $U_0$ we have
$$
\sqrt{-1}\Theta = \frac{1}{8\pi}\sum_{i = 1}^4 \frac{dX_i \wedge
d\bar{X}_{i}}{(1 + |X_{i}|^{2})^{2}} > 0.
$$
\end{proof}

\refstepcounter{equation}
\subsection{}
\label{subsection:can-div-ind}

There is a unique (prime) Cartier divisor $D_{\infty} \sim D_0$ on
$V'$ with equation $\Delta_1 = 0$ on $U_1$. Indeed, one can define
$D_{\infty}$ by taking the closure of the locus $(\Delta_1 = 0)
\subset U_1$ in $V'$, and $D_{\infty} \sim D_0$ because of the
rational map $V' \dashrightarrow \mathbb{P}^1$ which extends the
map $A \mapsto \det A$ on $U_0$. Equivalently, one can notice that
the divisors $D_{\infty}$ and $D_0 + (f)$ determine the same
valuations on the function field $\mathbb{C}(V')$, where $f$ is a
rational function on $V'$, equal $\Delta_{0}^{-1}$ on $U_0$ (cf.
Remark~\ref{remark:eqs-of-d-infty} below). Note also that $D_0 \ne
D_{\infty}$ (cf. the similar construction of $\mathbb{P}^1$ and of
the divisors $0, \infty \in \mathbb{P}^1$).

\begin{lemma}
\label{theorem:index-fano} $K_{V'} \sim -4D_0$.
\end{lemma}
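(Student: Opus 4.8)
The plan is to compute the canonical divisor of $V'$ by working on the explicit charts and tracking how the canonical sheaf transforms under the gluing maps $\varphi_1$ and $\varphi_{\alpha,\beta}$. Since $V'$ is glued from copies of $\mathbb{C}^4/(\mathbb{Z}/2\mathbb{Z})$, the natural approach is to lift everything to the smooth universal covers $\mathbb{C}^4$ and compute there, keeping careful track of $(\mathbb{Z}/2\mathbb{Z})$-invariance. On the chart $U_0 = \mathbb{C}^4/(\mathbb{Z}/2\mathbb{Z})$, pulled back to its cover, the canonical form is $\omega_0 := dX_1\wedge dX_2 \wedge dX_3 \wedge dX_4$. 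The first task is to express $\omega_0$ in the coordinates $X_i^{(1)}$ on $U_1$ via the transition map $\varphi_1$, i.e. compute the Jacobian of the matrix inversion $A \mapsto A^{-1}$.

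First I would carry out the Jacobian computation for $\varphi_1$. Writing the inverse-matrix map explicitly as $X_i^{(1)} = \pm X_j/\Delta_0$ (up to signs and index permutation coming from the adjugate formula $A^{-1} = \frac{1}{\det A}\operatorname{adj}(A)$), a direct determinant calculation should give that the transition Jacobian is a power of $\Delta_0$; for the inversion of $2\times 2$ matrices on a $4$-dimensional space I expect the factor to be $\Delta_0^{-4}$, so that $\omega_0 = \Delta_0^{-4}\,\omega_1$ on the overlap, where $\omega_1 := dX_1^{(1)}\wedge\cdots\wedge dX_4^{(1)}$. Equivalently, the canonical section transforms with the same cocycle as $\Delta_0^{-4}$. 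Since $\Delta_1 = \Delta_0^{-1}$ on $U_0\cap U_1$ by the construction in {\bf 2.1}, and since $D_0$ has local equation $\Delta_0$ while $D_\infty$ has local equation $\Delta_1$, this transition law says precisely that the canonical sheaf $\mathcal{K}_{V'}$ and $\mathcal{O}_{V'}(-4D_0)$ are glued by the same transition functions. Because $l_{\alpha,\beta}$ is a coordinate permutation (hence has Jacobian $\pm 1$), the same power $\Delta_0^{-4}$ governs the transitions through the charts $U_{\alpha,\beta}$ as well.

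To make this rigorous on the algebraic space $V'$ rather than only on the geometric quotients, I would invoke the sheaf property on $V'$ exactly as in the proof that $D_0$ is Cartier (Lemma~\ref{theorem:l-metric} and the surrounding discussion, citing \cite[Ch.~2]{artin}): the meromorphic $4$-form $\omega_0$, lifted to the universal covers of the charts, is $(\mathbb{Z}/2\mathbb{Z})$-\emph{invariant} (each sign change $X_i\mapsto -X_i$ multiplies $\omega_0$ by $(-1)^4 = 1$), so it descends to a genuine rational section of $\mathcal{K}_{V'}$. Its divisor of zeros and poles is then read off directly: on each chart $\omega_0$ is regular and nonvanishing in the affine coordinates, so its only zeros or poles lie along the boundary divisors $D_0$ and $D_\infty$, and the transition computation pins down the multiplicity. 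Concretely, $(\omega_0) = a\,D_0 + b\,D_\infty$ for integers $a,b$ determined by the orders of vanishing of $\Delta_0^{-4}$ (equivalently $\Delta_1^{4}$) along the two divisors.

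The main obstacle will be bookkeeping the sign and index permutations in the adjugate formula so that the Jacobian is computed correctly, together with making sure the $(\mathbb{Z}/2\mathbb{Z})$-quotient does not introduce a spurious ramification contribution to $K_{V'}$. The latter is the genuinely delicate point: on a quotient $\mathbb{C}^4/(\mathbb{Z}/2\mathbb{Z})$ one must verify that $\omega_0$ descends without acquiring a fractional or extra boundary coefficient, which is exactly why the invariance $(-1)^4 = 1$ is essential and why the local equations of $D_0$ were chosen to be the $(\mathbb{Z}/2\mathbb{Z})$-invariant functions $\Delta_0,\Delta_{\alpha,\beta}$. Once the Jacobian factor is confirmed to be $\Delta_0^{-4}$ and the invariance check is in place, combining $\Delta_1 = \Delta_0^{-1}$ with $D_\infty \sim D_0$ (established in {\bf 2.4}) immediately yields $K_{V'}\sim -4D_0$.
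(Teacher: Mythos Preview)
Your approach is essentially the same as the paper's: compute the Jacobian of the inversion map $A\mapsto A^{-1}$ on the overlap $U_0\cap U_1$, find that $dX_1\wedge\cdots\wedge dX_4$ transforms by the fourth power of $\Delta_0$ (equivalently $\Delta_1^{-4}$), extend to the charts $U_{\alpha,\beta}$ using that $l_{\alpha,\beta}$ has Jacobian $\pm 1$, and read off $K_{V'}=-4D_\infty\sim -4D_0$. One small slip: if the transition Jacobian $\det(\partial X^{(1)}/\partial X)$ equals $\Delta_0^{-4}$, then $\omega_1=\Delta_0^{-4}\omega_0$, hence $\omega_0=\Delta_0^{4}\omega_1=\Delta_1^{-4}\omega_1$, not $\omega_0=\Delta_0^{-4}\omega_1$ as you wrote; this is exactly the bookkeeping you flagged, and once corrected it gives the pole of order $4$ along $D_\infty$ as required.
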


\begin{proof}
Let us start with the form $\omega := dX_1 \wedge dX_2 \wedge dX_3
\wedge dX_4$ on $U_0$. We have
$$
\hat{X}_j := d\big(\frac{X_{j}}{X_{1}X_{4} - X_{2}X_{3}}\big) =
\frac{dX_{j}}{X_{1}X_{4} - X_{2}X_{3}} -
\frac{X_{j}d\big(X_{1}X_{4} - X_{2}X_{3}\big)}{\big(X_{1}X_{4} -
X_{2}X_{3}\big)^{2}}
$$
for all $j$, and it is easy to see that
\begin{equation}
\begin{array}{c}
\nonumber \hat{X}_1 \wedge \hat{X}_2 \wedge \hat{X}_3 \wedge
\hat{X}_4 =\displaystyle \frac{dX_{1} \wedge dX_{2} \wedge dX_{3}
\wedge dX_{4}}{\big(X_{1}X_{4} - X_{2}X_{3}\big)^{4}} -
\frac{\sum_{1 \leqslant j \leqslant 4}X_{j}d\big(X_{1}X_{4} -
X_{2}X_{3}\big)dX_{1} \wedge \ldots \wedge \hat{dX_{j}} \wedge
\ldots
\wedge dX_{4}}{\big(X_{1}X_{4} - X_{2}X_{3}\big)^{5}} = \\
\nonumber =\displaystyle \frac{dX_{1} \wedge dX_{2} \wedge dX_{3}
\wedge dX_{4}}{\big(X_{1}X_{4} - X_{2}X_{3}\big)^{4}}.
\end{array}
\end{equation}
Then we get
$$
dX_1 \wedge dX_2 \wedge dX_3 \wedge dX_4 =
\frac{1}{\Delta_{1}^{4}}~dX^{(1)}_{1} \wedge dX^{(1)}_{2} \wedge
dX^{(1)}_{3} \wedge dX^{(1)}_{4}
$$
on $U_0 \cap U_1$. This extends $\omega$ to a meromorphic form on
$U_0 \cup U_1$. Note that $K_{V'} = -4D_{\infty} \sim -4D_0$ on
$U_0 \cup U_1$.

Repeating the same construction, with $U_1$ replaced by
$U_{\alpha, \beta}$, we obtain a global meromorphic section of the
line bundle $\mathcal{O}_{V'}(K_{V'})$, equal
$$
\frac{1}{l_{\alpha, \beta}^{*}(\Delta_{\alpha,
\beta})^{4}}~dX^{(\alpha, \beta)}_{1} \wedge dX^{(\alpha,
\beta)}_{2} \wedge dX^{(\alpha, \beta)}_{3} \wedge dX^{(\alpha,
\beta)}_{4}
$$
on $U_0 \cap U_{\alpha, \beta}$ for all $\alpha, \beta$. Hence
$K_{V'} = -4D_{\infty} \sim -4D_0$ on $V'$.
\end{proof}

\begin{remark}
\label{remark:eqs-of-d-infty} It follows from the proof of
Lemma~\ref{theorem:index-fano} that the equation of the divisor
$D_{\infty}$ on $U_{\alpha, \beta}$ is $l_{\alpha,
\beta}^{*}(\Delta_{\alpha, \beta}) = 0$ for all $\alpha, \beta$.
\end{remark}

\begin{lemma}
\label{theorem:g-2-4-a-s-2} $V'$ is a quadratic cone with a unique
singular point.
\end{lemma}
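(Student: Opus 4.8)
The plan is to identify $V'$ through its numerical invariants and then appeal to the classification of Fano varieties of large index. First I would assemble what the previous lemmas already give: by Lemma~\ref{theorem:index-fano} we have $K_{V'}\sim-4D_0$, while Proposition~\ref{theorem:l-metric-prop} shows that $D_0$ is ample, and $D_0$ is Cartier by its construction. Together with compactness (Lemma~\ref{theorem:g-2-4-a-s}), this exhibits $V'$ as a projective \emph{Gorenstein} Fano fourfold with $-K_{V'}\sim4D_0$ for an ample Cartier divisor $D_0$; in particular its Fano index is at least $4$, and since the index of any Fano $n$-fold is at most $n+1$, here it is either $4$ or $5$.

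Next I would pin down the singularities. Away from the images of the origins of the six charts the group $\mathbb{Z}/2\mathbb{Z}$ acts freely (it acts by $-\mathrm{id}$ on each $\mathbb{C}^4$), so $V'$ is smooth there; hence $\mathrm{Sing}(V')$ is a finite set and the singularities are isolated. Locally at such a point the naive chart presents $\mathbb{C}^4/\{\pm\mathrm{id}\}$, a cyclic quotient of type $\tfrac12(1,1,1,1)$; as $-\mathrm{id}\in SL_4(\mathbb{C})$ this is Gorenstein, and the Reid--Tai sum $\tfrac12+\tfrac12+\tfrac12+\tfrac12=2>1$ shows it is terminal. \emph{This is where the main difficulty lies.} The genuine scheme structure on $V'$ is \emph{not} the one read off chart-by-chart but the one furnished by the overlapping charts $U_0\cup U_1,\ U_0\cup U_{1,2},\dots$ (Remark~\ref{remark:al-artin}), and $V'$ is only birationally — not regularly — a quotient of $G(2,4)$ (Remark~\ref{remark:only-bir}). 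I expect that carrying out the gluing carefully collapses the quotient singularities of the honest quotient to a \emph{single} terminal point (an ordinary double point), and establishing precisely this — that $V'$ is the quadric cone rather than the more singular naive quotient of $G(2,4)$ — is the crux of the argument.

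Finally I would compute the Picard number and invoke Fujita. Since $\mathbb{C}(V')=\mathbb{C}(U_0)$ (Lemma~\ref{theorem:g-2-4-a-s-1}), $V'$ is birational to $G(2,4)/(\mathbb{Z}/2\mathbb{Z})$, and as $\mathrm{Cl}(G(2,4))\cong\mathbb{Z}$ with the involution preserving the ample generator, one gets $\mathrm{Cl}(V')_{\mathbb{Q}}\cong\mathbb{Q}$, so $\rho(V')=1$. Because $V'$ is singular it cannot be $\mathbb{P}^4$, which rules out index $5$; hence the Fano index equals $4=\dim V'$. A result of T.~Fujita on Gorenstein Fano $n$-folds with log-terminal singularities and index $n$ then identifies $(V',D_0)$ with a (possibly singular) hyperquadric $Q^4\subset\mathbb{P}^5$. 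A quadric cut out by a form of rank $r$ is singular exactly along a linear subspace $\mathbb{P}^{5-r}$; the isolatedness established above forces $5-r=0$, i.e. $r=5$, so $V'$ is a quadratic cone with a unique singular point, as claimed.
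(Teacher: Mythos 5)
Your strategy coincides with the paper's: both proofs combine Lemma~\ref{theorem:index-fano} ($K_{V'}\sim -4D_0$), Proposition~\ref{theorem:l-metric-prop} ($D_0$ ample Cartier) and the claim that $V'$ has isolated terminal Gorenstein singularities, and then invoke Fujita's classification \cite[Theorem 3.1.14]{isk-prok} to identify $V'$ with a quadric in $\mathbb{P}^5$, isolatedness of the singular locus forcing the rank to be $5$. The paper's proof is exactly this, compressed into two sentences; your additional steps (the bound on the index, excluding $\mathbb{P}^4$, the rank count) are the routine part of the argument.

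The genuine gap is the one you flag yourself, and it is fatal to the write-up as it stands. Your singularity analysis is internally inconsistent: if, as you first assert, $V'$ had a point locally isomorphic to $\mathbb{C}^4/\{\pm\mathrm{id}\}$, i.e.\ a $\tfrac12(1,1,1,1)$-point, then the lemma would be \emph{false}, since such a point has embedding dimension $10$, whereas a singular point of a quadric hypersurface in $\mathbb{P}^5$ has embedding dimension at most $5$. (In that scenario Fujita's theorem would not produce a cone; it would instead show that some other hypothesis --- ampleness or Cartierness of $D_0$, or the singularities being canonical --- must fail.) So the chart-by-chart description and the desired conclusion cannot both hold, which is precisely why Remark~\ref{remark:al-artin} insists that the scheme structure of $V'$ is not the naive orbifold one. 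You recognize this and then write that you ``expect'' the true gluing to collapse the singularities to a single ordinary double point, explicitly calling this the crux --- but an expectation is not a proof, and every load-bearing input to your final step (that $V'$ is singular at all, and that its singularities are isolated, terminal and Gorenstein for the genuine scheme structure) depends on it. For comparison, the paper disposes of this very point with the words ``by definition of the latter and construction of $V'$''; your proposal makes visible how much is hidden in that phrase, but it does not close the gap either. A secondary slip: you infer $\mathrm{Cl}(V')_{\mathbb{Q}}\cong\mathbb{Q}$ from birationality of $V'$ to $G(2,4)/(\mathbb{Z}/2\mathbb{Z})$, but class groups and Picard numbers are not birational invariants, so this step is unjustified; fortunately $\rho(V')=1$ is not among the hypotheses of \cite[Theorem 3.1.14]{isk-prok}, so it can simply be deleted.
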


\begin{proof}
Firstly, $V'$ has only isolated terminal singularities (by
definition of the latter and construction of $V'$). Now the
assertion follows from Lemma~\ref{theorem:index-fano},
Proposition~\ref{theorem:l-metric-prop} and \cite[Theorem
3.1.14]{isk-prok}.
\end{proof}

\bigskip

\section{Proof of Theorem~\ref{theorem:main}}
\label{section:quotient-2}

\refstepcounter{equation}
\subsection{}
\label{subsection:gr-action-v-prime}

Consider $V'$ as in Section~\ref{section:quotient-1}. Let us show
that the $G$-action extends from $U_0 =
\mathbb{C}^{4}/(\mathbb{Z}/2\mathbb{Z})$ to a regular action on
$V'$ (note $G$ is obviously defined on $U_0$).

By construction of $V'$, every $g \in G$ determines a birational
automorphism $g : V' \dashrightarrow V'$, regular and bijective on
$U_0 \cup U_1$. Furthermore, we have $V' \setminus{(U_0 \cup U_1)}
\subseteq D_0 \cup D_{\infty}$, since
$$
U_0 \cup U_1 \supseteq V' \setminus{(D_0 \cup D_{\infty})} = U_0
\cap U_1 \cap \bigcap_{\alpha, \beta} U_{\alpha, \beta}
$$
(cf. {\bf 2.1} and the equations of $D_0,~D_{\infty}$). Then,
since $g(D \cap U_{0}) = D \cap U_{0}$, $g(D_{\infty} \cap U_{1})
= D_{\infty} \cap U_{1}$ and $D_0,~D_{\infty}$ are irreducible, we
obtain that $g$ is an isomorphism in codimension $2$ on $V'$, and
hence $g_{*}(D) = D$, $g_{*}(D_{\infty}) = D_{\infty}$ in
$\mathrm{Pic}(V')$. This implies that $g$ is induced by an
automorphism of $\mathbb{P}^5 \supset V'$. Thus, we get $g \in
\mathrm{Aut}(V')$ and $U_{0}/G \approx V'/G$ (cf.
Lemma~\ref{theorem:g-2-4-a-s-1}).

\begin{remark}
\label{remark:prokhor} Note that given the embedding $U_0 :=
\mathbb{C}^4 \subset G(2, 4) =: V$, the $G$-action extends from
$U_0$ to $V$ by similar arguments as for $V'$ above. There is also
another construction (communicated by Yu.\,Prokhorov) of $V$ and
$G \subset \mathrm{Aut}(V)$ such that compactification $V \supset
U_0$ is $G$-equivariant. Indeed, take the standard
compactification of $U_0 := \mathbb{C}^{4}$ by $\mathbb{P}^4$,
with the divisor $B\subset\mathbb{P}^4$ at infinity, and extend
the $G$-action to $\mathbb{P}^4$ in the usual way. Then there is a
$G$-invariant smooth quadric $S \subset B = \mathbb{P}^{3}$. Let
$\sigma: Y \longrightarrow \mathbb{P}^4$ be the blow up of $S$
with the exceptional divisor $E := \sigma^{-1}(S)$. It is easy to
see that the linear system $|2L - E|$, $L := \sigma^{*}(B)$,
determines a birational contraction $\tilde{\sigma}: Y
\longrightarrow \tilde{Y}$, mapping the proper transform
$\sigma_{*}^{-1}(B) \sim L - E$ of the divisor $B$ to a point.
Moreover, since the normal bundle of $\sigma_{*}^{-1}(B) \simeq
\mathbb{P}^3$ on $Y$ is $\mathcal{O}_{\mathbb{P}^{3}}(-1)$, one
immediately gets that $\tilde{\sigma}$ is the blow up of a smooth
point on $\tilde{Y}$. Furthermore, $\tilde{Y}$ is a (smooth) Fano
$4$-fold, with $\mathrm{Pic}(\tilde{Y}) = \mathbb{Z}\cdot
\tilde{\sigma}_{*}(L)$ and such that
$\tilde{\sigma}^{*}(K_{\tilde{Y}}) = K_Y - 3\sigma_{*}^{-1}(B) =
-4L$, i.e., the Fano index of $\tilde{Y}$ is $4$. Hence, by
\cite[Theorem 3.1.14]{isk-prok}, $\tilde{Y}$ is a smooth quadric
in $\mathbb{P}^5$. Finally, the construction of $\tilde{Y}$
implies that both $\sigma$ and $\tilde{\sigma}$ are
$G$-equivariant. Hence $\tilde{Y}$ ($=V$) is a $G$-equivariant
compactification of $U_0$. However, we could not obtain similar
(``Italian") construction for $V'$, since the way we have built
$V'$ is not actually birational. Yet we need $V'$ to have, for
instance, such properties as Lemma~\ref{theorem:many-invariants}
below (which does not hold for the smooth quadric $V$).
\end{remark}

\begin{lemma}
\label{theorem:many-invariants} The space $H^{0}(V',
\mathcal{O}_{V'}(D_0))$ contains three linearly independent
$G$-invariant elements.
\end{lemma}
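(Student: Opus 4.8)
The plan is to identify $H^0(V',\mathcal L)$ explicitly and then exhibit three $G$-invariant sections inside it. By Lemma~\ref{theorem:g-2-4-a-s-2}, $V'$ is a quadratic cone in $\mathbb P^5$ with $\mathcal L=\mathcal O_{V'}(D_0)$ being the restriction of $\mathcal O_{\mathbb P^5}(1)$ (since the Fano index is $4$ and $K_{V'}\sim -4D_0$). Thus $H^0(V',\mathcal L)$ is $6$-dimensional, spanned by the six homogeneous coordinates on $\mathbb P^5$. The section $\theta$ with $(\theta)_0=D_0$ reads as $\Delta_0=X_1X_4-X_2X_3$ on the chart $U_0$, and more generally a section of $\mathcal L$ restricted to $U_0$ is a regular function there that, after clearing the single pole along $D_\infty$ in the other charts, is compatible with the gluing. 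Concretely, I would pull back the linear forms on $\mathbb P^5$ to $U_0$ and describe the resulting six-dimensional space of functions explicitly in the coordinates $X_1,\dots,X_4$.

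The key observation is that, on $U_0$, each $\mathcal L$-section corresponds to a quadratic expression. Indeed $\theta_0=\Delta_0$ is the ``quadric'' defining the cone, and the remaining basis elements of $H^0(V',\mathcal L)\big\vert_{U_0}$ should be the four linear coordinates $X_1,X_2,X_3,X_4$ together with the constant $1$ (up to multiplication by $\theta_0$ in the trivialization fixed by $h$). So I would write a generic section as $c_0\cdot 1+c_1X_1+c_2X_2+c_3X_3+c_4X_4+c_5(X_1X_4-X_2X_3)$ in the trivialization over $U_0$, and then verify that this indeed glues to a global section by checking compatibility across $\varphi_1$ and $\varphi_{\alpha,\beta}$ using $\Delta_1=\Delta_0^{-1}$ and $X_i^{(1)}=\varphi_1^{-1*}(X_i)$, as in the proof of Lemma~\ref{theorem:index-fano}.

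Next I would use the $G$-action. Since $G=\mathbb O\times\mathbb I$ acts on $U_0$ by left and right multiplication on the matrix $A$, it acts on $\Delta_0=\det A$ by the square of the product of the two $SL_2$-determinant characters, which is trivial; hence $\theta$ (i.e.\ the section with $(\theta)_0=D_0$) is already $G$-invariant up to scalar, and in fact $G$-invariant since $\mathbb O,\mathbb I\subset SL_2$. This furnishes one invariant element, namely $\theta$ itself. The constant section $1$ in the chosen trivialization corresponds to $D_\infty$ (the section with pole structure dual to $\theta$), and the induced map $A\mapsto\det A$ giving $D_\infty\sim D_0$ shows $1$ is likewise $G$-invariant. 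This already gives two invariant sections, $\theta$ and the section corresponding to $D_\infty$; the remaining task is to locate a \emph{third} independent invariant inside the span of $1,X_1,X_2,X_3,X_4,\Delta_0$.

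The third invariant is the main obstacle and the crux of the argument. The four linear functions $X_1,\dots,X_4$ span the standard representation $\mathbb C^2\otimes\mathbb C^2$ of $\mathbb O\times\mathbb I\subset SL_2\times SL_2$ on $2\times 2$ matrices, which carries no nonzero invariant linear form; so no invariant can come from the $X_i$ alone. Hence the two invariants I already have, $\theta\leftrightarrow\Delta_0$ and the section $\leftrightarrow D_\infty$, exhaust the invariants visible over a single chart \emph{unless} the global gluing mixes the constant/linear piece with the quadratic piece. Precisely this mixing is what I expect: because $V'$ is the $(\mathbb Z/2\mathbb Z)$-quotient construction rather than an honest quotient (Remark~\ref{remark:only-bir}), the space $H^0(V',\mathcal L)$ is built so that there is an additional $G$-invariant section, manifested as a $G$-invariant hyperplane section of the cone $V'\subset\mathbb P^5$ not accounted for by $D_0$ and $D_\infty$. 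I would therefore decompose $H^0(V',\mathcal L)\cong\mathbb C^6$ as a representation of $G$ and count invariants using character theory for $\mathbb O\times\mathbb I$: the standard $4$-dimensional block $\mathbb C^2\boxtimes\mathbb C^2$ contributes no invariant, while the complementary $2$-dimensional block (the one spanned by $\theta$ and by the $D_\infty$-section together with whatever global constant term appears) must contain a $G$-invariant $2$-dimensional trivial summand, giving exactly two invariants there; reconciling this with the claimed \emph{three} forces a careful re-examination of how the gluing maps $\varphi_{\alpha,\beta}$ (which permute coordinates before inverting) act, since the permutations $l_{\alpha,\beta}$ interact nontrivially with the $G$-action and can render an a priori non-invariant combination globally invariant. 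The decisive computation is thus to write down the $6\times 6$ matrices by which generators of $\mathbb O$ and $\mathbb I$ act on the coordinates of $\mathbb P^5$ (using the explicit left/right multiplication formulas and the induced action on $\det$), and to verify that the fixed subspace is at least three-dimensional. I expect this linear-algebra verification—tracking the representation on all six coordinates through the cone structure—to be the heart of the proof.
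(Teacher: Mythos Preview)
Your proposed basis $\{1,X_1,X_2,X_3,X_4,\Delta_0\}$ for $H^0(V',\mathcal L)\big\vert_{U_0}$ is the heart of the problem, and it is incorrect. Recall that on $V'$ the chart $U_0$ is $\mathbb{C}^4/(\mathbb{Z}/2\mathbb{Z})$, not $\mathbb{C}^4$; the functions $X_i$ are odd under $X_j\mapsto -X_j$ and hence are \emph{not} regular on $U_0$. The six-tuple you wrote down is exactly the restriction of the Pl\"ucker coordinates to the big cell of the smooth quadric $V=G(2,4)$, and your representation-theoretic count (two trivial summands plus the irreducible $\mathbb{C}^2\boxtimes\mathbb{C}^2$) is the correct count for $V$, where the invariant subspace is indeed only $2$-dimensional. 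The paper explicitly points out (Remark~\ref{remark:prokhor}) that Lemma~\ref{theorem:many-invariants} \emph{fails} for $V$; so the contradiction you ran into (``reconciling this with the claimed three'') is not an artifact of the gluing but a signal that you are analysing the wrong variety. No amount of tracking the permutations $l_{\alpha,\beta}$ will produce a third invariant out of that $6$-dimensional space, because there isn't one.

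The paper's argument does not attempt to write down $H^0(V',\mathcal L)$ explicitly at all. Instead it is geometric: $D_0$ and $D_\infty$ are $G$-invariant hyperplane sections that both pass through the vertex $O$ of the cone $V'\subset\mathbb{P}^5$. Blowing up $O$ realises $V''$ as a $\mathbb{P}^1$-bundle $\mathbb{P}(\mathcal E)\to Q$ over the smooth quadric $Q\subset\mathbb{P}^4$, and the $G$-action lifts to $\mathcal E$. Because the $0$-section and the exceptional divisor already give two disjoint $G$-invariant sections of this $\mathbb{P}^1$-bundle, the $G$-action on each fibre is forced to factor through $\mathbb{C}^*$; this immediately yields a third, smooth, $G$-invariant hyperplane section $H$ (one not passing through $O$), and $D_0,D_\infty,H$ are then visibly linearly independent. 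The missing idea in your approach is precisely this use of the cone/vertex structure of $V'$---the feature that distinguishes $V'$ from the smooth quadric $V$.
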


\begin{proof}
Note that $D_0$ and $D_{\infty}$ are $G$-invariant. Moreover,
since $D_0$ and $D_{\infty}$ are hyperplane sections of $V'
\subset \mathbb{P}^5$ which pass through the vertex $O\in
V'$,\footnote{Indeed, we have $D_0 \cap U_0 = (X_{1}X_{4} -
X_{2}X_{3} = 0)$, hence $O \in D_0$, and similarly for
$D_{\infty}$ on $U_1$.} there is also a smooth $G$-invariant
hyperplane section $H$ of $V'$. Indeed, consider the linear
projection $V' \dashrightarrow Q$ from $O$, with $Q \subset
\mathbb{P}^4$ being a smooth quadric (cf.
Lemma~\ref{theorem:g-2-4-a-s-2}). Let also $f: V'' \longrightarrow
V'$ be the blow up of $O$. Then we get $V'' =
\mathbb{P}(\mathcal{E})$ for some $\mathbb{C}^2$-vector bundle
$\mathcal{E}$ over $Q$ such that the natural projection $V''
\longrightarrow Q$ is $G$-equivariant.

Further, since both $\mathbb{O}, \mathbb{I}\subset G$ are simple
and commute with $\mathbb{C}^*$, the class of $\mathcal{E}$ in
$H^1(Q, GL_2(\mathcal{O}_Q))$ is $G$-invariant. Hence the
$G$-action on $V''$ extends to the one on $\mathcal{E}$. Now,
$\mathcal{E}$ admits two $G$-invariant sections, the $0$-section
and the one corresponding to the exceptional divisor of $f$. This
implies that the $G$-action on the fibers of the projection $V''
\longrightarrow Q$ coincides with the $\mathbb{C}^*$-action. The
existence of the above $H$ is now evident.

Finally, $D_0, D_{\infty}$ and $H$ are (obviously) linearly
independent in $H^{0}(V', \mathcal{O}_{V'}(D_0))$.
\end{proof}

\begin{lemma}
\label{theorem:v-prime} The $3$-fold $V'/G$ is rational.
\end{lemma}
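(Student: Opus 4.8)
The plan is to exploit the three linearly independent $G$-invariant sections produced in Lemma~\ref{theorem:many-invariants} in order to split $V'/G$ birationally into a fibration over $\mathbb{P}^2$ whose fibers are rational. Concretely, the three $G$-invariant elements $D_0, D_{\infty}, H \in H^{0}(V', \mathcal{O}_{V'}(D_0))$ determine a $G$-equivariant rational map
\[
\pi : V' \dashrightarrow \mathbb{P}^2, \qquad x \mapsto (D_0(x) : D_{\infty}(x) : H(x)),
\]
where $G$ acts \emph{trivially} on the target $\mathbb{P}^2$ precisely because the three sections are $G$-invariant. Since $V' \subset \mathbb{P}^5$ is a quadric cone (Lemma~\ref{theorem:g-2-4-a-s-2}) and $D_0, D_{\infty}, H$ are hyperplane sections, the map $\pi$ is the linear projection from the codimension-$3$ linear center cut out by these three hyperplanes, and its generic fiber is the intersection of the quadric cone with a linear subspace of $\mathbb{P}^5$ of complementary dimension, hence a conic (a curve) in $\mathbb{P}^2$-worth of a linear $\mathbb{P}^2 \subset \mathbb{P}^5$.

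First I would take the Stein-type / generic-fiber point of view and pass to the function-field level. Because $G$ acts trivially on the base $\mathbb{P}^2$, invariant theory for the fibration reduces to understanding the $G$-action on a single generic fiber $F$ of $\pi$, regarded over the field $K := \mathbb{C}(\mathbb{P}^2)$ of the base. That is, I would show
\[
V'/G \;\approx\; F_K / G,
\]
where $F_K$ is the generic fiber, a conic (degree-$2$ curve) defined over $K$. The key geometric input is that the quadric cone $V'$, when sliced by the moving $\mathbb{P}^2 \subset \mathbb{P}^5$ corresponding to a point of the base, meets it in a low-degree rational curve; combined with the fact (established in the proof of Lemma~\ref{theorem:many-invariants}) that the $G$-action on the fibers of the projection from the vertex coincides with the $\mathbb{C}^*$-action, the induced $G$-action on $F_K$ should factor through a finite cyclic or small group acting on a rational curve.

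Next I would verify that the quotient of the generic fiber by this residual $G$-action is again rational over $K$, and that $K$ itself is rational over $\mathbb{C}$ (which is automatic since the base is $\mathbb{P}^2$ with trivial $G$-action). A conic over a field is rational as soon as it has a $K$-point; here I expect to produce such a point using the $G$-invariant divisors $D_0, D_{\infty}$, whose base locus meets the generic fiber in a section of $\pi$. Assembling these,
\[
V'/G \approx (F_K/G) \approx \mathbb{P}^1_K \approx K \times \mathbb{P}^1 \approx \mathbb{C}^2 \times \mathbb{P}^1,
\]
a rational $3$-fold, which is the assertion.

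The main obstacle I anticipate is controlling the \emph{residual} $G$-action on the generic fiber $F_K$ and checking that the quotient $F_K/G$ is still a rational curve over $K$ rather than a curve of higher genus. This is exactly where the structure established in Lemma~\ref{theorem:many-invariants} — namely that, after blowing up the vertex $O$ and identifying $V'' = \mathbb{P}(\mathcal{E})$ over the quadric $Q$, the $G$-action on the fibers coincides with the $\mathbb{C}^*$-action — must be leveraged: it forces $G$ to act on the rulings of the cone through the center $\mathbb{C}^*$, so that the effective action on $F$ is by a group whose quotient is manifestly $\mathbb{P}^1$. I would therefore spend most of the proof making precise the interaction between the projection $\pi$ from the three invariant hyperplanes and the projection from the vertex $O$, ensuring the generic fiber is a rational curve with a $K$-rational point and that the $G$-quotient preserves rationality; the remaining bookkeeping (linear independence, genericity of the fiber, existence of the section) is routine.
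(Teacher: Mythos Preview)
Your proposal contains a genuine dimension error that breaks the argument as written. The variety $V'$ is a \emph{four}-fold (a quadric hypersurface in $\mathbb{P}^5$), so a linear projection $V'\dashrightarrow\mathbb{P}^2$ has two-dimensional generic fiber, not one-dimensional. Concretely, the fiber over a point of the target is $V'$ intersected with a $\mathbb{P}^3$ containing the center of projection, hence a quadric \emph{surface}, not a conic. Your chain $V'/G \approx F_K/G \approx \mathbb{P}^1_K$ therefore collapses: $F_K$ is a surface over $K$, and the ``conic with a $K$-point'' argument does not apply. You would instead need to show that the quotient of a quadric surface by the full $G$-action is a rational curve over $K$, which you have not addressed. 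Your appeal to the statement from Lemma~\ref{theorem:many-invariants} that ``the $G$-action on the fibers coincides with the $\mathbb{C}^*$-action'' is also misplaced: that statement concerns the $\mathbb{P}^1$-fibration $V''=\mathbb{P}(\mathcal{E})\to Q$ obtained by blowing up the vertex $O$, a different fibration from your $\pi$.

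It is also worth noting that the paper uses the \emph{opposite} projection. Writing $V'=(x_0x_1+x_2x_3+x_4^2=0)$ with $\mathbb{O}\times\mathbb{I}$ fixing $x_0,x_1,x_5$, the three invariant hyperplanes cut out the plane $\Pi=(x_2=x_3=x_4=0)$, and the paper projects \emph{from} $\Pi$, i.e.\ via $[x_2:x_3:x_4]$, rather than via $[x_0:x_1:x_5]$ as you do. The point of this choice is that $\mathbb{O}\times\mathbb{I}$ then acts \emph{trivially on the fiber} (a quadratic cone in the variables $x_0,x_1,x_5$ and one fiber coordinate) and nontrivially on the base; together with the existence of $G$-invariant sections of the fibration (coming from the two lines $V'\cap\Pi$ and the extra invariant hyperplane), this yields a $G$-equivariant birational product decomposition
\[
V' \approx_G \bigl[\text{quadratic cone with trivial }(\mathbb{O}\times\mathbb{I})\text{-action}\bigr]\times\mathbb{P}^2,
\]
from which rationality of $V'/G$ is immediate. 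Your projection puts all of $\mathbb{O}\times\mathbb{I}$ into the fiber instead, which is why you are left with the harder residual problem; switching to the paper's projection eliminates it.
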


\begin{proof}
By Lemma~\ref{theorem:many-invariants}, we may assume the equation
of $V' \subset \mathbb{P}^5 = \mathrm{Proj}\big(\mathbb{C}[x_{0},
\ldots, x_{5}]\big)$ to be $x_{0}x_{1} + x_{2}x_{3} + x_{4}^2 =
0$, with $\mathbb{C}^* \subset G$ acting diagonally and
$\mathbb{O} \times \mathbb{I} \subset G$ fixing $x_0$, $x_1$,
$x_5$. Let $V' \dashrightarrow \mathbb{P}^2$ be the restriction to
$V'$ of the linear projection from the $G$-invariant plane $\Pi :=
(x_2 = x_3 = x_4 = 0)$. Note that $V' \cap \Pi$ is a pair of
distinct lines (with trivial $\mathbb{O} \times
\mathbb{I}$-action). Then, blowing up $V'$ at $V' \cap \Pi$, we
get a normal $4$-fold $V'' \approx_G V'$ together with a
$G$-equivariant morphism $V'' \longrightarrow \mathbb{P}^2$ which
has at least three $G$-invariant sections and generic fiber
$\approx [\text{a quadratic cone}]$. In particular, we get
$$
V' \approx_G \left[\text{quadratic cone with trivial ($\mathbb{O}
\times \mathbb{I}$)-action}\right] \times \mathbb{P}^2,
$$
which implies that $V'/G$ is rational.
\end{proof}

\begin{lemma}
\label{theorem:v-prime-1} The $3$-fold $V/G$ is rational.
\end{lemma}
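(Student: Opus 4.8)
The plan is to deduce rationality of $V/G$ from Lemma~\ref{theorem:v-prime} by a tower-of-invariants argument, exploiting that the group $\mathbb{Z}/2\mathbb{Z}$ used to construct $V'$ is a central subgroup of $G$. First I would observe that the involution $X_i \mapsto -X_i$ generating this $\mathbb{Z}/2\mathbb{Z}$ coincides with the element $-1 \in \mathbb{C}^* \subset G$; hence $\mathbb{Z}/2\mathbb{Z}$ is normal in $G$ and acts trivially on $\mathbb{C}^4/(\mathbb{Z}/2\mathbb{Z})$, so that the $G$-action on $V'$ factors through $\bar{G} := G/(\mathbb{Z}/2\mathbb{Z})$.

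Next, recall from Lemma~\ref{theorem:g-2-4} that $V = G(2,4)$ compactifies $\mathbb{C}^4$, so $\mathbb{C}(V) = \mathbb{C}(\mathbb{C}^4)$, whereas $\mathbb{C}(V') = \mathbb{C}(\mathbb{C}^4/(\mathbb{Z}/2\mathbb{Z}))$ by Lemma~\ref{theorem:g-2-4-a-s-1}; in other words $\mathbb{C}(V') = \mathbb{C}(V)^{\mathbb{Z}/2\mathbb{Z}}$ (cf. Remark~\ref{remark:only-bir}). Since the linear $G$-action on $\mathbb{C}^4$ commutes with the involution, this identification is compatible with the $G$-actions, and therefore $V' \approx_G V/(\mathbb{Z}/2\mathbb{Z})$.

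Then the tower of invariant subfields gives
$$
\mathbb{C}(V)^G = \big(\mathbb{C}(V)^{\mathbb{Z}/2\mathbb{Z}}\big)^{\bar{G}} = \mathbb{C}(V')^{\bar{G}} = \mathbb{C}(V')^G,
$$
where the last equality holds because $\mathbb{Z}/2\mathbb{Z}$ acts trivially on $V'$. Hence $V/G \approx V'/G$, and the latter is rational by Lemma~\ref{theorem:v-prime}, which proves the lemma. Finally, since $V = G(2,4)$ is a $G$-equivariant compactification of $\mathbb{C}^4$ (Remark~\ref{remark:prokhor}), one gets $V/G \approx \mathbb{C}^4/G$, and this establishes Theorem~\ref{theorem:main}.

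The point requiring care is the genuine $G$-equivariance of the identification $V' \approx V/(\mathbb{Z}/2\mathbb{Z})$: because $V'$ is built as an algebraic space whose gluing maps are only $\mathbb{Z}/2\mathbb{Z}$-equivariant rather than invariant (Remark~\ref{remark:al-artin}), I would verify that the $G$-action on $V'$ coming from its construction is really the descent of the $G$-action on $V$, and not merely a birational coincidence on the chart $U_0 \cup U_1$. Once this compatibility is confirmed, the invariant-field computation above is purely formal and the conclusion is immediate.
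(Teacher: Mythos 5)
Your proof is correct and takes essentially the same approach as the paper: both arguments absorb the central involution $\mathbb{Z}/2\mathbb{Z} = \{\pm 1\} \subset \mathbb{C}^* \subset G$ into the group, so that $\mathbb{C}(\mathbb{C}^4)^G = \mathbb{C}(U_0)^G = \mathbb{C}(V')^G$, and then invoke Lemma~\ref{theorem:v-prime}. The paper phrases your tower-of-invariants step as the identification $\mathbb{C}^4/(\mathbb{O} \times \mathbb{I} \times \mathbb{C}^{*}) \simeq \mathbb{C}^4/(\mathbb{O} \times \mathbb{I} \times \mathbb{C}^{*} \times \mathbb{Z}/2\mathbb{Z}) = U_0/G$ via the non-canonical isomorphism $\mathbb{C}^{*} \simeq \mathbb{C}^{*}/(\mathbb{Z}/2\mathbb{Z})$, which is exactly your computation.
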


\begin{proof}
We have
$$
\mathbb{C}^4/G = \mathbb{C}^4/(\mathbb{O} \times \mathbb{I} \times
\mathbb{C}^{*}) \simeq \mathbb{C}^4/(\mathbb{O} \times \mathbb{I}
\times \mathbb{C}^{*} \times \mathbb{Z}/2\mathbb{Z}) = U_{0}/G
$$
for the (non-canonical) isomorphism $\mathbb{C}^{*} \simeq
\mathbb{C}^{*}/(\mathbb{Z}/2\mathbb{Z})$. Now the statement
follows from Lemma~\ref{theorem:v-prime} because $\mathbb{C}(V'/G)
= \mathbb{C}(U_0/G)$.
\end{proof}

Lemma~\ref{theorem:v-prime-1} proves Theorem~\ref{theorem:main}.

\bigskip

\thanks{{\bf Acknowledgments.} I would like to thank I.\,Cheltsov, S.\,Galkin
and Yu.\,G.\,Prokhorov for valuable remarks and exceptional
patience during the preparation of this paper. Also the referee's
remarks have allowed to improve the exposition.

\bigskip


\begin{thebibliography}{4}

\bibitem{artin}
Artin M. Algebraic spaces // Yale Univ. Press, New Haven, CT. 1971.

\smallskip

\bibitem{hart}
Hartshorne R. Ample subvarieties of algebraic varieties // Lecture
Notes in Mathematics. Vol. 156. Berline: Springer Verlag. 1970.

\smallskip

\bibitem{isk-prok}
Iskovskikh V. A., Prokhorov Yu. G. Fano varieties. Encyclopaedia
of Mathematical Sciences // Algebraic geometry V / ed. Parshin A.
N., Shafarevich I. R. V. 47. Berlin: Springer Verlag. 1999.

\smallskip

\bibitem{prok-inv}
Prokhorov Yu. G. Fields of invariants of finite linear groups //
Cohomological and geometric approaches to rationality problems. V.
282 of Progr. Math. P. 245--273. Birkh\"auser Boston Inc., Boston,
MA. 2010.

\end{thebibliography}
\end{document}